\theoremstyle{plain}
\newtheorem{thm}{Theorem}[section]
\newtheorem{lem}[thm]{Lemma}
\newtheorem{cor}[thm]{Corollary}
\noindent \emph{Proof.} {}{#1}{}}{\hfill
\theoremstyle{plain} 
\newcommand{\thistheoremname}{}
\newtheorem{genericthm}[section]{\thistheoremname}
\theoremstyle{definition}
\def\less{\setminus}
\def\dfn#1{{\sl #1}}
\def\less{\setminus}
\title{Multicolor bipartite Ramsey  number of double stars}
\author{Gregory DeCamillis\thanks{Supported by  the Honors Undergraduate Thesis (HUT) Scholarship  and the Allyn M. Stearman Research Scholarship for undergraduate students at the University of Central Florida.   E-mail address: {\tt  Gregory.Decamillis@ucf.edu}.}\,\, and  Zi-Xia Song\thanks{Supported by  NSF grant    DMS-2153945. E-mail address: {\tt Zixia.Song@ucf.edu}.}}
  \affil{ 
  { \small {Department  of Mathematics, University of Central Florida, Orlando, FL 32816, USA}}  
     }
\date{}
\begin{document}
\maketitle
\begin{abstract}
For positive integers   $n, m$, the double star  $S(n,m)$ is  the graph consisting of the disjoint union of two stars  $K_{1,n}$  and  $K_{1,m}$ together with an edge  joining their centers.  Finding  monochromatic copies  of double stars in edge-colored complete bipartite graphs has attracted much attention.  The $k$-color bipartite Ramsey number of $ S(n,m)$, denoted by $r_{bip}(S(n,m);k)$,  is the smallest  integer $N$  such that,  in any $k$-coloring of the edges of the complete bipartite graph $K_{N,N}$,  there is a monochromatic copy of $S(n,m)$. The study of bipartite Ramsey numbers was initiated in the early 1970s by Faudree and Schelp and, independently, by Gy\'arf\'as and Lehel.   The exact value of  $r_{bip}(S(n,m);k)$ is only known  when  $n=m=1$. Applying the  Tur\'an argument in the bipartite setting, here we prove that   if $k=2$ and $n\ge m$,  or $k\ge3$ and $n\ge 2m$, then 
\[ r_{bip}(S(n,m);k)=kn+1.\]

\end{abstract}
{\it{Keywords}}: Bipartite Ramsey number, double star, Tur\'an number, proper edge coloring\\
{\it{2010 Mathematics Subject Classification}}: 05C55;  05D10; 05C15
 
\baselineskip 16pt

\section{Introduction}
All graphs   considered in this paper  are finite, simple, and undirected.   Give a graph $G$, we use $e(G)$ to denote the number of edges in $G$.  We say $G$ is \dfn{$H$-free} if $G$ does not contain a given graph $H$ as a subgraph. For  two disjoint sets $A, B\subseteq V(G)$,   we  say that       $A$ is \dfn{anti-complete}     to $B$   if no vertex in $A$ is adjacent to any vertex in $B$;  we use $e_G(A, B)$ to denote the number of edges between $A$ and $B$ in $G$.  Finally we use $d_G(v)$ to denote the degree of a vertex $v$ in $G$. 
For any positive integer $k$, we write  $[k]$ for the set $\{1,2, \ldots, k\}$. A \dfn{$k$-edge-coloring} of a graph $G$ is a function $\tau:E(G)\to [k]$.   We say that $\tau$ is \dfn{proper} if $\tau(e)\ne \tau(e')$  for any two adjacent edges $e, e'$   of $G$. We think of the set  $[k]$ as a set of colors, and we may identify a member of $[k]$ as a color, say, color $k$ is blue.   Given a graph $H$,  we write \dfn{$G \rightarrow (H;k)$} if    every  $k$-edge-coloring of   $G$ contains a monochromatic  ${H} $. Then $G \nrightarrow (H;k)$ if there exists $\sigma:E(G)\to [k]$ such that $G$   contains no monochromatic copy of $H$ under $\sigma$; such a $k$-edge-coloring $\sigma$ is called a \dfn{critical $k$-coloring} of $G$.   The \dfn{$k$-color Ramsey number} $r(H; k)$ of    $H$   is the smallest integer $n$ such that   $K_n \rightarrow (H;k)$. Computing $r(H; k)$ in general is a notoriously difficult problem.

  There have been  many generalizations of Ramsey numbers  over the years; one natural generalization is  replacing the underlying complete graph by a complete bipartite graph.  The \dfn{$k$-color bipartite Ramsey number} of a graph $H$, denoted by $r_{bip}(H;k)$,  is the smallest  integer $N$  such that,  in any $k$-edge-coloring of  of the complete bipartite graph $K_{N,N}$,  there is a monochromatic copy of $H$. Faudree and Schelp~\cite{FSpath} and,  independently,  Gy\'arf\'as and Lehel~\cite{GyarfBipPath}  initiated the   study of bipartite Ramsey numbers   in the early 70s;      they  both  determined the exact value of $r_{bip}(P_n;2)$. The natural extension to $k$-color bipartite Ramsey number of  paths and cycles has been considered recently.  Buci\'c, Letzter and Sudakov~\cite{Bucicr3, Bucicr3P} determined asymptotically the $k$-color bipartite Ramsey number of paths and even cycles for $k\in\{3,4\}$ and     bounds for  the $k$-color bipartite Ramsey number of paths and cycles for $k\ge5$.  
 Yan and Peng~\cite{2-cycles}     determined the exact value of  the bipartite Ramsey number of $r_{bip}(C_{2n}, C_{2m})$ for all $n\ge m\ge5$.    Beineke and Schwenk~\cite{Beineke} first studied the bipartite Ramsey number of $K_{s,s}$. 
 The known bounds for $r_{bip}(K_{s,s};2)$~\cite{Conlon, hattingh, Irving} are,  
 \[\left(1+o(1)\right)\frac{s}e({\sqrt2})^{s+1} \le r_{bip}(K_{s,s};2)\le \left(1+o(1)\right)2^{s+1}\log s.\]
  The upper bound, due to Conlon~\cite{Conlon},   improves upon the previous known upper bound of Irving~\cite{Irving} from 1978,     and its proof uses  the   Tur\'an argument   in the bipartite setting as used in~\cite{Irving}.  
 
 In this paper we  study  the $k$-color bipartite Ramsey number of double stars, where for positive integers   $n, m$,
the \dfn{double star} $S(n,m)$     is the graph consisting of the disjoint union of two stars  $K_{1,n}$  and  $K_{1,m}$ together with an edge  joining their centers.   Note that $S(1,1)=P_4$.   Finding monochromatic copies of double stars in edge-colored complete bipartite graphs has been investigated by Gy\'arf\'as~\cite{GY1}, Mubayi~\cite{Mubayi2001},  Liu, Morris and  Prince~\cite{Liu2008}, DeBiasio, Krueger and S\'ark\"ozy~\cite{DKS},  DeBiasio,  Gy\'arf\'as, Krueger, Ruszink\'o and S\'ark\"ozy~\cite{DeBiaBip}, and S\'ark\"ozy~\cite{Sarkozy2022}.  
 It is worth noting that   the exact value of the $2$-color Ramsey number $r(S(n,m);2)$ is not completely known yet;    little is known regarding the $k$-color Ramsey number $r(S(n,m);k)$ except that $r(P_4;k)$  is known for all $k\ge2$.   Very recently, Ruotolo and the second author~\cite{doublestar} proved that  if  $k$ is odd and $n$ is sufficiently large compared with  $m$ and  $k$,  then 
\[ r(S(n,m);k)=kn+m+2.\]
 It turns out  that  $r_{bip}(S(n,m);k)$   behaves more nicely than  $r(S(n,m);k)$. Recently, DeBiasio, Gy\'arf\'as, Krueger, Ruszink\'o, and S\'ark\"ozy~\cite{DeBiaBip} determined the exact value of  $r_{bip}(P_4;k)$. 
\begin{thm}[DeBiasio, Gy\'arf\'as, Krueger, Ruszink\'o and S\'ark\"ozy~\cite{DeBiaBip}]\label{t:P4}
\[r_{bip}(P_4;k)= \begin{cases}
k+1 &\text{ if } k\in \{1,2,3\}\\
6&\text{ if } k=4\\
2k-3   &\text{ if } k\ge5.
\end{cases}
\]
\end{thm}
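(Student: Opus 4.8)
My plan is to translate the statement into a question about decomposing the edge set of $K_{N,N}$ into star forests. The first observation is that a connected bipartite graph has no $P_4$ subgraph if and only if it has at most one vertex of degree at least $2$, that is, if and only if it is a star; hence an arbitrary bipartite graph is $P_4$-free exactly when it is a disjoint union of stars (a \emph{star forest}). Therefore a $k$-edge-coloring of $K_{N,N}$ avoids a monochromatic $P_4$ if and only if it partitions $E(K_{N,N})$ into $k$ star forests, and $r_{bip}(P_4;k)$ is precisely the least $N$ for which no such partition exists (equivalently, the least $N$ at which the star arboricity of $K_{N,N}$ exceeds $k$). The proof then reduces to showing that $K_{N,N}$ does decompose into $k$ star forests just below the claimed value, and does not at the claimed value.

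For the non-existence (upper bound) direction, suppose $E(K_{N,N})$ is partitioned into star forests $F_1,\dots,F_k$. The basic bookkeeping is that every edge of a star forest joins a center to a leaf and every leaf is incident to exactly one edge, so $e(F_i)$ equals the number of leaves of $F_i$; writing $c_i$ and $z_i$ for the numbers of centers and of isolated vertices of $F_i$ among the $2N$ vertices, this gives $e(F_i)=2N-c_i-z_i$. Summing and using $\sum_i e(F_i)=N^2$ yields $\sum_i(c_i+z_i)=2kN-N^2$. Since every vertex has degree $N$, whenever $N>k$ each vertex must be a center of some $F_i$, so $\sum_i c_i\ge 2N$; together these force $N\le 2k-2$, that is $r_{bip}(P_4;k)\le 2k-1$.

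Getting from $2k-1$ to the exact value is where the real work lies, and the decisive quantity turns out to be an integrality condition. Pushing the two inequalities above to near-equality pins down the extremal structure—each vertex a center exactly once, no isolated vertices, and hence all stars of a common size—so that each $F_i$ would need $c_i=4N/(N+2)$ centers. Whether this is an integer controls the threshold: for the binding even order $N=2k-2$ it fails (for instance $N=4$, which is exactly why $K_{4,4}$ does not split into three star forests), improving the bound to $r_{bip}(P_4;k)\le 2k-2$, while a finer parity analysis at the odd order $N=2k-3$ supplies the last unit for $k\ge 5$. For the existence direction I would give explicit decompositions certifying the lower bounds: for $k\le 3$ the $k$-regular graph $K_{k,k}$ is a union of $k$ perfect matchings (each a star forest); $K_{5,5}$ admits a hand-built partition into four star forests; and for $k\ge 5$ one partitions $K_{2k-4,2k-4}$ into $k$ star forests by combining perfect matchings with a bounded number of ``double-star'' forests (one star centered in each part), with the counts tuned so the edge totals match.

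The main obstacle, and the reason the answer is piecewise, is the exceptional behaviour at small $k$ together with the sharp odd-order analysis. At $k=4$ the forced count $c_i=4\cdot 6/8=3$ is an integer, so the impossibility of decomposing $K_{6,6}$ into four star forests is invisible to the counting and must be established by a direct structural argument; this is precisely the source of the isolated value $r_{bip}(P_4;4)=6$. For $k\ge 5$ the extra unit that brings the answer down to $2k-3$ only materializes at the odd order $N=2k-3$, where $\sum_i(c_i+z_i)=3N$ leaves genuine slack in the counting, so I expect to need a discharging- or parity-type refinement tracking how centers in the two parts constrain one another. That odd-order refinement is the crux of the whole proof; by contrast the matching constructions for $N=2k-4$ are routine in spirit and serve only to certify that the bound is attained.
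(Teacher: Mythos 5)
This statement is not proved in the paper at all: it is quoted as a known result of DeBiasio, Gy\'arf\'as, Krueger, Ruszink\'o and S\'ark\"ozy \cite{DeBiaBip}, so there is no in-paper proof to compare against and your proposal must stand on its own. Your reformulation is sound: a bipartite graph is $P_4$-free exactly when it is a star forest, so $r_{bip}(P_4;k)$ is the least $N$ at which $E(K_{N,N})$ admits no partition into $k$ star forests. Your counting is also correct: $e(F_i)=2N-c_i-z_i$ gives $\sum_i(c_i+z_i)=2kN-N^2$, and for $N>k$ every vertex is a center somewhere, so $2kN-N^2\ge 2N$, i.e.\ $N\le 2k-2$. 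Moreover, at $N=2k-2$ the forced structure (every vertex a center exactly once, no isolated vertices, hence every vertex a leaf of degree exactly $1$ in the other $k-1$ forests, hence all stars of size $k-1$) together with $c_i k=2N=4k-4$ requires $k\mid 4$; this settles $k=3$ and in fact all $k\notin\{1,2,4\}$ at the even order. So the framework genuinely delivers $k\le 3$ and the bound $r_{bip}(P_4;k)\le 2k-2$ for $k\ge 5$.

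The gaps are the two cases you yourself flag, and they are not minor: they are the entire content of the theorem for $k\ge 4$. (a) For $k\ge 5$ you must show $K_{2k-3,2k-3}$ has no partition into $k$ star forests, and here your method loses traction for a structural reason you should make explicit: at $N=2k-3$ the count gives $\sum_i(c_i+z_i)=3N$ against the lower bound $2N$, so vertices may be centers in several forests, isolated vertices may occur, and star sizes are no longer forced to be equal. No integrality or parity contradiction is available from the aggregate count, and the ``discharging- or parity-type refinement'' you invoke is never exhibited; this is precisely the hard step, and without it the claimed value $2k-3$ is unproved. (b) The lower-bound constructions for $k=4$ (a partition of $K_{5,5}$ into four star forests) and for $k\ge 5$ (a partition of $K_{2k-4,2k-4}$ into $k$ star forests) are asserted but not constructed; note that the naive idea of using perfect matchings alone cannot work, since the union of two perfect matchings is a disjoint union of cycles and already contains $P_4$, so the ``double-star forests'' you mention must carry most of the edges and their existence needs an actual construction. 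One encouraging remark: your $k=4$ impossibility is easier than you fear. At $k=4$, $N=6$ the extremal structure \emph{is} forced (all stars of size $3$, no isolated vertices), and counting centers per side gives, with $a_i$ centers of $F_i$ in $X$, the equation $6=a_i+3(3-a_i)$, i.e.\ $a_i=3/2$, a contradiction. So the genuinely open part of your plan is only the odd order $N=2k-3$ for $k\ge 5$, but as it stands that is a real hole, not a routine refinement.
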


Here we prove the following main result.   

\begin{restatable}{thm}{doublestar}\label{t:main}
 If  $k=2$ and $n\ge m$, or $k\ge3$ and $n\ge2m$, 
then
 \[r_{bip}(S(n,m);k)=  kn+1.\]
\end{restatable}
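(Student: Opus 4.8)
The plan is to establish the two matching bounds $r_{bip}(S(n,m);k)\ge kn+1$ and $r_{bip}(S(n,m);k)\le kn+1$ separately. The lower bound is the easy direction and works uniformly for all $k$ whenever $n\ge m$: since $K_{kn,kn}$ is $kn$-regular and bipartite, König's edge-coloring theorem partitions its edge set into $kn$ perfect matchings, and grouping these into $k$ blocks of $n$ matchings each produces a $k$-edge-coloring in which every color class is $n$-regular. As $S(n,m)$ with $n\ge m$ has a vertex of degree $n+1$, no $n$-regular graph can contain it, so this is a critical coloring and $K_{kn,kn}\nrightarrow(S(n,m);k)$.

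For the upper bound I would first record the elementary but crucial observation that a color-$i$ edge $uv$ yields a monochromatic $S(n,m)$ exactly when one endpoint has color-$i$ degree at least $n+1$ and the other has color-$i$ degree at least $m+1$. Hence, in a color class $G_i$ with no monochromatic $S(n,m)$, if we classify each vertex as \emph{heavy}, \emph{medium}, or \emph{light} according as its $G_i$-degree lies in $[n+1,\infty)$, $[m+1,n]$, or $[0,m]$, then every heavy vertex is adjacent only to light vertices. This drives a Tur\'an-type bound: writing $2e(G_i)$ as the sum of degrees over the heavy set $H_i$, medium set $M_i$, and light set $L_i$, and noting that the degree sum over $H_i$ equals the number of heavy--light edges, which is at most the degree sum over $L_i$ and hence at most $m|L_i|$, I obtain $2e(G_i)\le 2m|L_i|+n|M_i|$. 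When $n\ge 2m$ this is at most $n(|L_i|+|M_i|)\le 2nN$, so every $S(n,m)$-free color class on $K_{N,N}$ has at most $nN$ edges. With this lemma the case $k\ge3$, $n\ge 2m$ is immediate: taking $N=kn+1$, if no color contained a monochromatic $S(n,m)$ then
\[
N^2=\sum_{i=1}^k e(G_i)\le knN<(kn+1)N=N^2,
\]
a contradiction.

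The case $k=2$, $n\ge m$ is the main obstacle, precisely because the edge-counting lemma requires $n\ge 2m$ and genuinely fails when $m\le n<2m$. Here I would exploit a parity phenomenon special to two colors: for $N=2n+1$, the red- and blue-degrees of each vertex of $K_{N,N}$ sum to the odd number $2n+1$, so exactly one of them is at least $n+1$; thus every vertex is heavy in exactly one color. Let $A$ and $B$ be the sets of red-heavy and blue-heavy vertices, so $\{A,B\}$ partitions $X\cup Y$. Because $n\ge m$ forces every red-light vertex to be blue-heavy, the red-neighbors of any vertex of $A$ all lie in $B$, and symmetrically for $B$; in particular, if $A\cap X\ne\emptyset$ then a red-heavy $X$-vertex has at least $n+1$ red-neighbors in $B\cap Y$, forcing $|B\cap Y|\ge n+1$, and the analogous implication holds for each of the four quadrants.

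Finally I would close the $k=2$ case with a short analysis of which of $A\cap X$, $B\cap X$ are empty, colliding the quadrant inequalities with the identity $|A\cap Y|+|B\cap Y|=N=2n+1$. In the generic case both $A\cap X$ and $B\cap X$ are nonempty, giving $|A\cap Y|,|B\cap Y|\ge n+1$, whose sum exceeds $N$. In the degenerate case where, say, $A\cap X=\emptyset$, the whole side $X$ is blue-heavy, so a direct count sends at least $(n+1)N$ blue edges into $A\cap Y$, whose vertices are red-heavy and therefore each absorb at most $n$ blue edges, yielding $(n+1)N\le n|A\cap Y|\le nN$. Either way one reaches a contradiction, completing the upper bound and hence the theorem. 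I expect the bookkeeping in this last structural argument—especially verifying every quadrant/emptiness subcase—to be the most delicate part of the write-up, while the Tur\'an bound and the lower bound are routine.
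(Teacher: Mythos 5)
Your proposal is correct, and for the $k\ge3$ case it takes a genuinely lighter route than the paper. The lower bound is identical to the paper's \cref{l:lowbd}: decompose $K_{kn,kn}$ into $kn$ perfect matchings and group them into $k$ color classes, each $n$-regular. For the upper bound with $k\ge3$ and $n\ge2m$, however, the paper invokes its full Tur\'an-type theorem (\cref{l:bipartite}), whose proof fixes an edge-maximum $S(n,m)$-free spanning subgraph of $K_{p,p}$, minimizes the number of vertices of degree at least $n+1$ subject to that, and runs an edge-rerouting argument to get $e(H)\le\max\{np,\,2m(p-m)\}$ for all $n\ge m$ and $p\ge 3n+1$. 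You prove only what the theorem actually needs: the heavy/medium/light classification, plus the fact that heavy vertices have only light neighbors (valid here because the host is bipartite, so the two star neighborhoods cannot overlap), gives $2e(G_i)\le 2m|L_i|+n|M_i|\le 2nN$ once $n\ge 2m$ --- no extremal choice, no rerouting, and no hypothesis $p\ge 3n+1$. What your weaker lemma loses is the range $m\le n<2m$, which the paper's stronger bound covers and then reuses for \cref{c:upper1,c:upper2}; but for the stated theorem your lemma suffices. For $k=2$ the two arguments run on the same engine --- with $N=2n+1$ every vertex is heavy in exactly one color, and monochromatic neighbors of heavy vertices are heavy in the other color --- yet the closing counts differ: the paper double-counts edges between the blue-heavy set $A\subseteq X$ and its blue neighborhood $B\subseteq Y$, obtaining $(n+1)|A|\le m|B|$ and $(n+1)|B|\le m|A|$, whereas you count vertices, playing $|A\cap Y|+|B\cap Y|=2n+1$ against the quadrant bounds $|A\cap Y|\ge n+1$ and $|B\cap Y|\ge n+1$. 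One small point in your favor: your degenerate case $A\cap X=\emptyset$ genuinely requires its own argument, and the paper's final display is vacuous in the corresponding situation (its two inequalities read $0\le 0$ when its set $A$ is empty), so the explicit handling you flag as delicate is not wasted bookkeeping but actually closes a case the paper passes over silently.
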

 
Following  Irving~\cite{Irving} and  Conlon~\cite{Conlon}, our proof utilizes the Tur\'an argument in the bipartite graph setting. It is worth noting that the method we developed here does not seem to work for the case when $n=m$, as  $r_{bip}(S(n,n);k)$ behaves quite differently. 
Using  the $k$-color bipartite Ramsey number of $P_4$,  the construction in~\cite[Proposition 1.7]{DeBiaBip} shows  that    $K_{p,p}\nrightarrow (S(n,n);k)$, where $p=(r_{bip}(P_4;k)-1)n$.  Combining this with \cref{t:P4}  
leads to    \cref{c:lower}.   

  \begin{cor}[DeBiasio, Gy\'arf\'as, Krueger, Ruszink\'o and S\'ark\"ozy~\cite{DeBiaBip}]\label{c:lower}
\[r_{bip}(S(n,n);k)\ge \begin{cases}
3n+1 &\text{ if } k=3\\
5n+1&\text{ if } k=4\\
(2k-4)n+1   &\text{ if } k\ge5.
\end{cases}
\]
\end{cor}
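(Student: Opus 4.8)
The plan is to realize the lower bound through a blow-up of a critical coloring for $P_4$, exactly the route indicated before the statement. Set $N=r_{bip}(P_4;k)-1$ and $p=Nn$. By \cref{t:P4} and the definition of the bipartite Ramsey number, $K_{N,N}$ admits a $k$-edge-coloring $\sigma$ with no monochromatic $P_4$. Writing the parts of $K_{N,N}$ as $\{x_1,\dots,x_N\}$ and $\{y_1,\dots,y_N\}$, I would replace each $x_i$ by an independent set $X_i$ of $n$ vertices and each $y_j$ by an independent set $Y_j$ of $n$ vertices, so that $\bigcup_i X_i$ and $\bigcup_j Y_j$ are the two parts of a copy of $K_{p,p}$. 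I then color every edge between $X_i$ and $Y_j$ with the color $\sigma(x_iy_j)$, obtaining a $k$-edge-coloring $\tau$. Once I show $\tau$ has no monochromatic $S(n,n)$, I get $K_{p,p}\nrightarrow(S(n,n);k)$, hence $r_{bip}(S(n,n);k)\ge p+1=(r_{bip}(P_4;k)-1)n+1$, and the three displayed cases follow immediately by substituting the values $r_{bip}(P_4;3)=4$, $r_{bip}(P_4;4)=6$, and $r_{bip}(P_4;k)=2k-3$ for $k\ge5$ from \cref{t:P4}.

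The crux is a structural description of $\sigma$ color by color. Fix a color $c$ and let $G_c$ be the subgraph of $K_{N,N}$ formed by the edges colored $c$. Since $G_c$ is bipartite and contains no $P_4$, I claim every connected component of $G_c$ is a star. Indeed, in a component that is not a single vertex or a single edge, choose a vertex $w$ of degree at least $2$ with two neighbors $a,b$; if $a$ had a neighbor $z\ne w$, then $z,a,w,b$ would be a $P_4$, the four vertices being distinct because $z$ lies on $w$'s side and $b$ on $a$'s side, forcing $z\ne b$. This contradiction shows every neighbor of $w$ is a leaf, so the component is the star centered at $w$. Hence each color class of $\sigma$ is a vertex-disjoint union of stars.

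Under the blow-up this structure is inherited cleanly: a star of $G_c$ whose center-blob has size $n$ and which has $t$ leaf-blobs becomes, in color $c$, a complete bipartite graph $K_{n,tn}$, and distinct stars yield vertex-disjoint such graphs. Thus the color-$c$ subgraph of $\tau$ is a disjoint union of graphs of the form $K_{n,tn}$ with $t\ge1$. Since $S(n,n)$ is connected, any monochromatic copy of it would have to sit inside a single $K_{n,tn}$; but a copy of $S(n,n)$ requires two adjacent vertices each of degree at least $n+1$, whereas in $K_{n,tn}$ every edge has an endpoint on the $tn$-side of degree exactly $n$. Therefore $\tau$ has no monochromatic $S(n,n)$, which is what the construction needs.

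I expect the only delicate point to be the star characterization, specifically the bipartite check that $z,a,w,b$ genuinely form a path (the key being $z\ne b$, which holds because $z$ and $b$ lie on opposite sides of $K_{N,N}$). The degree count inside $K_{n,tn}$ and the substitution into \cref{t:P4} are then routine. It is worth noting that this bound is tight for the construction precisely because taking center-blobs of size $n$ pins the minimum degree along every color-$c$ edge at exactly $n$, one short of the degree a center of $S(n,n)$ demands; this is the same phenomenon underlying the construction of \cite[Proposition 1.7]{DeBiaBip}.
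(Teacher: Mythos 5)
Your proof is correct and takes essentially the same approach as the paper: the paper obtains \cref{c:lower} exactly by blowing up a critical $P_4$-coloring of $K_{N,N}$ with $N=r_{bip}(P_4;k)-1$ (citing the construction in \cite[Proposition 1.7]{DeBiaBip}) and substituting the values from \cref{t:P4}, which is precisely what you reconstruct, with the star-decomposition and degree details correctly filled in.
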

 
     From the proof of \cref{t:main} when $k\ge3$, one can easily deduce the following upper bound for 
 $r_{bip}(S(n,n);k)$. We provide a proof here for completeness. 

\begin{cor}\label{c:upper1} For all $k \ge 3$ and $n\ge 1$, we have 
\[r_{bip}(S(n,n);k) \le \left\lfloor\left(1+\sqrt{1-\frac{2}{k}}\right)kn\right\rfloor + 1.\]
\end{cor}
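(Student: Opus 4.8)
The plan is to run the same Tur\'an argument that underlies \cref{t:main}, now with $m=n$, but to keep the per-color edge bound quantitative. Suppose $K_{N,N}\nrightarrow (S(n,n);k)$ and fix a critical $k$-coloring with color classes $G_1,\dots,G_k$ on the two sides $X,Y$, so $|X|=|Y|=N$ and $\sum_{i=1}^k e(G_i)=N^2$. I will show that this forces $N\le \big(1+\sqrt{1-2/k}\big)kn$. Since $r_{bip}(S(n,n);k)$ is the least $N$ with $K_{N,N}\rightarrow(S(n,n);k)$, no critical coloring can exist once $N$ exceeds this value, which gives the stated bound after applying $\lfloor\cdot\rfloor+1$. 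Note first that if $N<3n$ the desired inequality is automatic, because its right side is at least $kn\ge 3n$; so I may assume $N\ge 3n$.

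The first step is the structural reformulation. A monochromatic $S(n,n)$ in color $i$ is exactly an edge of $G_i$ both of whose endpoints have $G_i$-degree at least $n+1$ (the leaves of the two stars lie on opposite sides of the bipartition and so are automatically disjoint). Writing $X_i^+=\{v\in X: d_{G_i}(v)\ge n+1\}$ and $Y_i^+=\{w\in Y: d_{G_i}(w)\ge n+1\}$, the absence of a monochromatic $S(n,n)$ means precisely that $X_i^+$ is anti-complete to $Y_i^+$ in $G_i$.

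The heart of the matter is the per-color Tur\'an bound $e(G_i)\le 2n(N-n)$. Put $a=|X_i^+|$ and $c=|Y_i^+|$. Since every edge has an endpoint of degree at most $n$, counting over the low-degree vertices gives $e(G_i)\le n(2N-a-c)$. On the other hand, the edges meeting $X_i^+$ all land in $Y\setminus Y_i^+$, so by completeness and the degree bound on $X\setminus X_i^+$ one gets $e(G_i)\le a(N-c)+n(N-a)$, and symmetrically $e(G_i)\le c(N-a)+n(N-c)$. A short case analysis on whether $a+c\ge 2n$ shows one of these three quantities is always at most $2n(N-n)$: if $a+c\ge 2n$ the first bound already gives $n(2N-a-c)\le 2n(N-n)$; otherwise, taking $a\le c$ (so $a<n$), the second bound equals $nN+a(N-c-n)\le nN+c(N-c-n)$, a concave function of $c$ that is increasing on $[0,n]$ when $N\ge 3n$ and hence is at most its value $2n(N-n)$ at $c=n$. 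This is the one delicate point, since the crude estimate $n(2N-a-c)$ alone is too weak when only a few vertices have high degree; the completeness bounds, which the proof of \cref{t:main} already provides, are exactly what repairs it.

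Finally I would sum over the colors: $N^2=\sum_{i=1}^k e(G_i)\le 2kn(N-n)$, that is, $N^2-2kn\,N+2kn^2\le 0$. Solving this quadratic yields $N\le kn+\sqrt{k^2n^2-2kn^2}=kn\big(1+\sqrt{1-2/k}\big)$. Thus every critical $k$-coloring forces $N\le\big(1+\sqrt{1-2/k}\big)kn$, so $K_{N,N}\rightarrow(S(n,n);k)$ whenever $N$ exceeds this, and therefore $r_{bip}(S(n,n);k)\le\big\lfloor\big(1+\sqrt{1-2/k}\big)kn\big\rfloor+1$. The main obstacle is the per-color bound $e(G_i)\le 2n(N-n)$; once it is in hand, the remaining quadratic computation is routine and is precisely what produces the constant $1+\sqrt{1-2/k}$.
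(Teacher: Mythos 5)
Your architecture is the same as the paper's: a per-color Tur\'an bound $e(G_i)\le 2n(N-n)$ for $S(n,n)$-free spanning subgraphs of $K_{N,N}$, summed over the $k$ colors, followed by the quadratic $N^2\le 2kn(N-n)$. The paper obtains the per-color bound by simply citing \cref{l:bipartite} with $m=n$ (noting $\max\{nN,2n(N-n)\}=2n(N-n)$ once $N\ge 3n+1$), whereas you re-derive it from scratch; your reformulation (a monochromatic $S(n,n)$ is exactly an edge joining two vertices of color-degree at least $n+1$), your three counting bounds, and the final quadratic computation are all correct. However, your case analysis for the per-color bound has a genuine error at exactly the point you call delicate.

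In the case $a+c<2n$ (with $a\le c$, so $a<n$ but possibly $n<c\le 2n-1$), you weaken $nN+a(N-c-n)\le nN+c(N-c-n)$ and then claim the right side is at most $2n(N-n)$ because $h(c)=nN+c(N-c-n)$ is increasing on $[0,n]$ with $h(n)=2n(N-n)$. That conclusion requires $c\le n$, which you do not have: $h$ keeps increasing up to $c=(N-n)/2$, which exceeds $n$ whenever $N>3n$ --- and $N>3n$ is precisely the regime of interest, since the $N$ you must rule out satisfies $N\ge kn+1\ge 3n+1$. Concretely, take $n=2$, $N=20$, $a=0$, $c=3$ (so $a+c<2n$ and $a\le c$): your intermediate quantity is $nN+c(N-c-n)=40+3\cdot 15=85$, while $2n(N-n)=72$, so the claimed inequality is false; the chain of inequalities breaks even though the quantity you actually need to bound, $nN+a(N-c-n)=40$, is fine. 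The repair is to weaken in the other factor: since $c\ge a$ and $a\ge 0$, one has $a(N-c-n)\le a(N-a-n)$, and $a\mapsto nN+a(N-a-n)$ is increasing on $[0,n]$ (again using $N\ge 3n$) with value $2n(N-n)$ at $a=n$; since $a<n$, this yields $e(G_i)\le 2n(N-n)$, and the rest of your proof goes through unchanged. Alternatively, you could invoke \cref{l:bipartite} directly, which is all the paper's own proof does.
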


\begin{proof}
Let $p: = \left\lfloor\left(1+\sqrt{1-\frac{2}{k}}\right)kn\right\rfloor + 1$ and $G:=K_{p,p} $.     Let $\tau: E(G)\rightarrow [k]$ be a  $k$-edge-coloring of $G$ with color classes $E_1, \ldots, E_k$.   For each $\ell\in[k]$,  let $G_\ell$ be the spanning subgraph of $G$ with edge set $E_\ell$.   Suppose  $G_\ell$ is $S(n,n)$-free for each $\ell\in[k]$. Since $p\ge3n+1$, we see that $\max\{np, 2n(p-n)\}=2n(p-n)$. By \cref{l:bipartite}, $e(G_\ell) \le  2n(p-n)$ for all $\ell \in [k]$. Then 
\[p^2=e(G)=e(G_1)+\cdots+e(G_k) \le 2kn(p-n),\]
which implies  that $p\le  \left\lfloor\left(1+\sqrt{1-\frac{2}{k}}\right)kn\right\rfloor$, 
contrary to   the choice of $p$. This proves that $G  \rightarrow (S(n,n);k)$, as desired. 
\end{proof}

We are also able to provide a general upper bound for $r_{bip}(S(n,m);k)$ when $m\le n<2m$.\medskip

\begin{cor}\label{c:upper2} For all integers $k \ge 3$ and $n\ge m\ge1$, we have 
\[kn+1\le r_{bip}(S(n,m);k) \le max\left\{ kn + 1, \left \lfloor\left(1+\sqrt{1-\frac{2}{k}}\right)km\right\rfloor + 1 \right \}.\]
\end{cor}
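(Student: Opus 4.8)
The plan is to mirror the proof of \cref{c:upper1} almost verbatim, treating the lower and upper bounds separately and feeding each color class into \cref{l:bipartite}.

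For the lower bound $kn+1\le r_{bip}(S(n,m);k)$, I would produce a critical $k$-coloring of $K_{kn,kn}$. Since $K_{kn,kn}$ is $kn$-regular and bipartite, K\"onig's edge-coloring theorem decomposes its edge set into $kn$ perfect matchings; bundling these into $k$ groups of $n$ matchings each yields a partition of $E(K_{kn,kn})$ into $k$ spanning $n$-regular subgraphs $G_1,\dots,G_k$, which I color by their index. Each $G_\ell$ then has maximum degree $n$, so it has no vertex of degree $n+1$; as $n\ge m$, every copy of $S(n,m)$ contains a center of degree $n+1$, so no $G_\ell$ contains a monochromatic $S(n,m)$. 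Hence $K_{kn,kn}\nrightarrow(S(n,m);k)$, giving $r_{bip}(S(n,m);k)\ge kn+1$. This is exactly the lower-bound construction underlying \cref{t:main}, and it is valid for all $n\ge m$ (not just $n\ge 2m$).

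For the upper bound I would set $p:=\max\left\{kn+1,\ \left\lfloor\left(1+\sqrt{1-\frac{2}{k}}\right)km\right\rfloor+1\right\}$ and $G:=K_{p,p}$, take any $k$-edge-coloring with color classes $G_1,\dots,G_k$, and suppose for contradiction that every $G_\ell$ is $S(n,m)$-free. By \cref{l:bipartite} (the $S(n,m)$-analogue, with $n\ge m$, of the bound $\max\{np,2n(p-n)\}$ used in \cref{c:upper1}), each such $G_\ell$ satisfies $e(G_\ell)\le\max\{np,\,2m(p-m)\}$, so summing over the $k$ colors yields
\[ p^2=e(G)=\sum_{\ell=1}^{k} e(G_\ell)\le k\max\{np,\,2m(p-m)\}=\max\{knp,\,2km(p-m)\}. \]
I would then split according to which term attains the maximum. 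If $np\ge 2m(p-m)$, then $p^2\le knp$ forces $p\le kn$, contradicting $p\ge kn+1$. Otherwise $2m(p-m)>np$, so $p^2\le 2km(p-m)$, i.e.\ $p^2-2kmp+2km^2\le 0$; the larger root of this quadratic is $km\left(1+\sqrt{1-\tfrac{2}{k}}\right)$, so $p\le\left\lfloor\left(1+\sqrt{1-\frac{2}{k}}\right)km\right\rfloor$, contradicting the definition of $p$. Either way we contradict the assumption, so some $G_\ell$ contains a monochromatic $S(n,m)$; that is, $G\rightarrow(S(n,m);k)$ and $r_{bip}(S(n,m);k)\le p$, as claimed.

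I expect essentially no obstacle in this step once \cref{l:bipartite} is available: the argument is precisely the quadratic double-counting of \cref{c:upper1} run with $m$ in the role of the smaller parameter, and the only thing to verify is that defining $p$ as a maximum of the two thresholds simultaneously defeats both cases. The genuinely substantive content—bounding the edges of an $S(n,m)$-free subgraph of $K_{p,p}$ by $\max\{np,2m(p-m)\}$—is exactly what \cref{l:bipartite} supplies and is assumed here. The regime $m\le n<2m$ is precisely the one in which, for large $p$, the term $2m(p-m)$ overtakes $np$, and this is what activates the second, square-root branch of the stated bound; in the complementary regime $n\ge 2m$ the term $np$ always dominates and the same computation collapses to the exact value $kn+1$ of \cref{t:main}.
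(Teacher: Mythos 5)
Your proposal is correct and takes essentially the same route as the paper: the paper likewise sets the target side length to the maximum of the two thresholds (there $r=\max\{p,q\}$ with $p=\left\lfloor\left(1+\sqrt{1-\frac{2}{k}}\right)km\right\rfloor+1$ and $q=kn+1$), applies \cref{l:bipartite} to each color class (noting $r\ge kn+1\ge 3n+1$ since $k\ge3$, a hypothesis check you leave implicit but which holds trivially), and splits into the same two cases $nr\ge 2m(r-m)$ and $nr\le 2m(r-m)$ to contradict $r\ge q$ and $r\ge p$ respectively. The only cosmetic differences are that you prove $K_{p,p}\rightarrow(S(n,m);k)$ directly rather than via the paper's ``suppose neither $K_{p,p}$ nor $K_{q,q}$ arrows'' framing, and that you re-derive the lower-bound construction which the paper delegates to \cref{l:lowbd}.
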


\begin{proof} Let $p := \left \lfloor\left(1+\sqrt{1-\frac{2}{k}}\right)km\right\rfloor + 1$ and $q := kn+1$.  It suffices to show that either $K_{p,p}   \rightarrow (S(n,m);k)$ or $K_{q,q}  \rightarrow (S(n,m) ; k)$.
    Suppose neither is true.  Let $G:=K_{r,r} $, where $r:=\max\{p,q\}$.    Let $\tau: E(G)\rightarrow [k]$ be a  critical coloring of $G$ with color classes $E_1, \ldots, E_k$.   For each $\ell\in[k]$,  let $G_\ell$ be the spanning subgraph of $G$ with edge set $E_\ell$.  Note that  $G_\ell$ is $S(n,m)$-free for each $\ell\in[k]$, and   $r\ge q=kn+1\ge 3n+1$.     By \cref{l:bipartite},   $e(G_\ell) \le  \max\{nr, 2m(r-m)\}$ for all $\ell \in [k]$. Thus, 
\[r^2=e(G)=e(G_1)+\cdots+e(G_k) \le \begin{cases}2km(r-m)\,\, \text{ if }\,\, nr\le 2m(r-m),\\
knr \,\,\text{ if }\,\, nr\ge2m(r-m).
\end{cases}\]
It follows   that $r\le p-1$ if $nr\le  2m(r-m)$,  and $r\le q-1$ if $nr\ge 2m(r-m)$, 
contrary to  the choice of $r$.  
\end{proof}

From the proof of \cref{t:main} when $k=2$, one can also   deduce the following exactly value of  for 
 $r_{bip}(S(n_1, m_1), S(n_2, m_2), \ldots, S(n_k, m_k))$. We omit the  proof here.

\begin{cor}
 For all integers $n_1\ge m_1\ge1$ and $n_2\ge m_2\ge1$, we have
 \[r_{bip}(S(n_1, m_1), S(n_2, m_2))=  n_1 + n_2  +1.\]
\end{cor}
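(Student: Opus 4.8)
The plan is to establish the two inequalities $r_{bip}(S(n_1,m_1),S(n_2,m_2))\ge n_1+n_2+1$ and $r_{bip}(S(n_1,m_1),S(n_2,m_2))\le n_1+n_2+1$ separately, reusing the structure of the $k=2$ case of \cref{t:main}. A cautionary remark up front: I would \emph{not} try to close the upper bound by feeding the two classes into \cref{l:bipartite} and comparing $e(G_1)+e(G_2)$ with $p^2$, because the resulting Tur\'an bounds need not sum below $p^2$; already $n_1=6,\,n_2=4,\,m_1=6,\,m_2=4$ (so $p=11$) gives bounds summing to $66+56=122>121=p^2$. So a structural degree argument, not pure edge counting, is the essential ingredient.

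For the lower bound I would exhibit one good coloring of $K_{p,p}$ with $p:=n_1+n_2$. Since $K_{p,p}$ is $p$-regular and bipartite, K\"onig's theorem decomposes its edge set into $p$ perfect matchings; I color $n_1$ of them with color $1$ and the remaining $n_2$ with color $2$. The color-$1$ graph is then $n_1$-regular and the color-$2$ graph is $n_2$-regular. Since any copy of $S(n_i,m_i)$ requires a vertex of degree at least $n_i+1$ (its larger center), neither color class can contain its target double star. Hence $K_{p,p}\nrightarrow(S(n_1,m_1),S(n_2,m_2))$, giving $r_{bip}\ge n_1+n_2+1$.

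For the upper bound, set $p:=n_1+n_2+1$, fix a $2$-coloring of $K_{p,p}$ with parts $X,Y$ and color classes $G_1,G_2$, and suppose for contradiction that $G_1$ is $S(n_1,m_1)$-free and $G_2$ is $S(n_2,m_2)$-free. Writing $d_i(v)$ for the color-$i$ degree, we have $d_1(v)+d_2(v)=n_1+n_2+1$ for every $v$, so each vertex is \emph{$1$-heavy} ($d_1(v)\ge n_1+1$) or \emph{$2$-heavy} ($d_2(v)\ge n_2+1$). The key structural step is that no color-$1$ edge joins two $1$-heavy vertices: otherwise, using $n_1\ge m_1$, the two endpoints (one in $X$, one in $Y$) serve as the two centers of a color-$1$ $S(n_1,m_1)$, their leaf sets lying in opposite parts and hence disjoint; symmetrically no color-$2$ edge joins two $2$-heavy vertices. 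Consequently, if $X$ contains a $1$-heavy vertex, then its at least $n_1+1$ color-$1$ neighbors all avoid the $1$-heavy vertices of $Y$, forcing at most $n_2$ of the latter; the analogous bound holds for each choice of part and color. If $X$ contains both a $1$-heavy and a $2$-heavy vertex, then $Y$ has at most $n_2$ vertices that are $1$-heavy and at most $n_1$ that are $2$-heavy, yet these cover $Y$, so $|Y|\le n_1+n_2<p$, a contradiction.

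The main obstacle is the degenerate configuration in which $X$ has no $1$-heavy vertex, so every vertex of $X$ is $2$-heavy; since then $X$ holds $p>n_1$ vertices that are $2$-heavy, the cardinality bound forces $Y$ to have no $2$-heavy vertex, i.e.\ every vertex of $X$ is purely $2$-heavy and every vertex of $Y$ purely $1$-heavy (and its color-swapped mirror image). Here the clean counting stalls and a separate argument is needed: summing color-$1$ degrees over $Y$ gives $e(G_1)\ge p(n_1+1)$, while summing over $X$ shows that if every vertex of $X$ had color-$1$ degree at most $m_1$ then $e(G_1)\le pm_1\le pn_1<p(n_1+1)$, a contradiction; hence some vertex of $X$ has color-$1$ degree at least $m_1+1$, and together with any ($1$-heavy) color-$1$ neighbor in $Y$ it again produces a color-$1$ $S(n_1,m_1)$. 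The mirror configuration is dispatched by the identical computation in color $2$. Combining the two bounds yields the claimed equality $r_{bip}(S(n_1,m_1),S(n_2,m_2))=n_1+n_2+1$.
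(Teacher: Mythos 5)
Your proof is correct, and in outline it is what the paper has in mind when it says this corollary follows ``from the proof of \cref{t:main} when $k=2$'': the lower bound is exactly the matching-decomposition coloring of \cref{l:lowbd}, with $n_1$ perfect matchings in color $1$ and $n_2$ in color $2$, and the upper bound rests on the same two facts that drive the paper's $k=2$ argument, namely that $d_1(v)+d_2(v)=n_1+n_2+1$ forces every vertex to be $i$-heavy for some $i$, and that a color-$i$ edge joining two $i$-heavy vertices yields a monochromatic $S(n_i,m_i)$, bipartiteness keeping the two leaf sets disjoint and $n_i\ge m_i$ letting an $i$-heavy vertex serve as either center. The execution differs, though, and in your favor. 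The paper's argument (transcribed to the asymmetric setting) sets $A:=\{x\in X: d_1(x)\ge n_1+1\}$ and $B:=N_1(A)\subseteq Y$, shows $N_2(y)\subseteq A$ for all $y\in B$, and derives the pair of inequalities $(n_1+1)|A|\le m_1|B|$ and $(n_2+1)|B|\le m_2|A|$, which it declares impossible; but these inequalities are vacuously true when $A=B=\emptyset$, i.e., precisely in your ``degenerate configuration'' where no vertex of $X$ is $1$-heavy. Your Cases B/C --- all of $X$ being $2$-heavy forces all of $Y$ to be $1$-heavy, and then the two-sided count $p(n_1+1)\le e(G_1)\le pm_1$ is absurd, so some $x\in X$ has $d_1(x)\ge m_1+1$ and completes a color-$1$ double star with a $1$-heavy neighbor --- supply exactly the argument that closes this case, so your case analysis is complete where the paper's written one is silent. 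One minor caveat: in your cautionary example against a pure Tur\'an count, you evaluate the bound of \cref{l:bipartite} at $p=11$ even though its hypothesis $p\ge 3n+1$ fails there (for $n_1=6$ one would need $p\ge 19$), so strictly speaking the theorem gives no bound at all in that range; this does not affect your conclusion --- it only reinforces the point that edge counting cannot close the $k=2$ case, which is consistent with the paper likewise abandoning \cref{l:bipartite} there in favor of a structural degree argument.
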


\begin{cor}
  For all integers  $k\ge3$ and $n_1\ge 2m_1, \ldots, n_k\ge  2m_k$, if  $n_1+n_2+\cdots+n_k \ge 3n_i$ for each $i \in [k]$, 
then
 \[r_{bip}(S(n_1, m_1), S(n_2, m_2), \ldots, S(n_k, m_k))=  n_1 + n_2 + \ldots + n_k +1.\]
\end{cor}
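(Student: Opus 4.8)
The plan is to prove matching lower and upper bounds for $N := n_1 + n_2 + \cdots + n_k + 1$; writing $s := n_1 + \cdots + n_k$, the goal is $r_{bip} = s + 1$. Both halves are short once \cref{l:bipartite} is available, and the whole point of the two hypotheses will be to make that lemma apply with the right numerology.

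For the lower bound I would exhibit a critical coloring of $K_{s,s}$, giving $r_{bip} \ge s + 1$. Since $K_{s,s}$ is $s$-regular bipartite, it decomposes into exactly $s$ perfect matchings $M_1,\dots,M_s$ (a proper $s$-edge-coloring, by K\"onig's theorem). I then group these matchings into $k$ consecutive blocks of sizes $n_1, n_2, \ldots, n_k$, which is possible because $\sum_i n_i = s$, and let color class $G_i$ be the union of the $i$-th block of $n_i$ matchings. Each $G_i$ is $n_i$-regular, hence has no vertex of degree $n_i + 1$; since every copy of $S(n_i,m_i)$ contains a center of degree $n_i + 1$, the graph $G_i$ is $S(n_i,m_i)$-free. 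Thus $K_{s,s} \nrightarrow (S(n_1,m_1), \ldots, S(n_k,m_k))$, and note this step uses only $m_i \ge 1$, not the stronger hypotheses.

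For the upper bound I would set $r := s + 1$ and suppose for contradiction that $K_{r,r}$ has a critical coloring with color classes $G_1, \ldots, G_k$, so each $G_i$ is $S(n_i,m_i)$-free. The hypothesis $n_1 + \cdots + n_k \ge 3n_i$ gives $r = s + 1 \ge 3n_i + 1$ for every $i$, which is exactly the size condition needed to invoke \cref{l:bipartite}, yielding $e(G_i) \le \max\{n_i r,\, 2m_i(r - m_i)\}$. The second hypothesis $n_i \ge 2m_i$ collapses this maximum to $n_i r$, since $2m_i(r - m_i) \le 2m_i r \le n_i r$. Summing over the $k$ classes, exactly as in the computation of \cref{c:upper2}, then gives
\[ r^2 = e(K_{r,r}) = \sum_{i=1}^{k} e(G_i) \le \sum_{i=1}^{k} n_i r = s\,r = (r-1)r < r^2, \]
a contradiction. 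Hence $K_{r,r} \rightarrow (S(n_1,m_1), \ldots, S(n_k,m_k))$, giving $r_{bip} \le s+1$ and, together with the lower bound, the claimed equality.

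There is no genuine obstacle once \cref{l:bipartite} is granted; the only care required is the bookkeeping of the two hypotheses, and it is worth flagging in the write-up which does what. The condition $\sum_j n_j \ge 3n_i$ is precisely what makes \cref{l:bipartite} applicable to each color at host size $r = s+1$, while $n_i \ge 2m_i$ is precisely what reduces the Tur\'an maximum to the clean value $n_i r$ that makes the edge count telescope. The reason $s+1$ is the exact threshold is that the summed Tur\'an bound $\sum_i n_i r$ already equals $r^2$ when $r = s$, so the lower-bound construction saturates the counting bound and only the increment to $r = s+1$ forces a monochromatic double star.
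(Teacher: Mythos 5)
Your proposal is correct and follows exactly the argument the paper intends for this corollary (which it states without proof): the lower bound generalizes \cref{l:lowbd} by partitioning the $s$ perfect matchings of $K_{s,s}$ into blocks of sizes $n_1,\dots,n_k$, and the upper bound mirrors the $k\ge3$ case of \cref{t:main}, using $\sum_j n_j\ge 3n_i$ to meet the size hypothesis of \cref{l:bipartite} and $n_i\ge 2m_i$ to reduce each bound to $n_i r$ before summing. Your closing remarks correctly identify the role of each hypothesis, so nothing further is needed.
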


The paper is organized as follows. We first prove  a Tur\'an-type theorem on bipartite graphs that are  $S(n,m)$-free in Section~\ref{Turan}, and  then  \cref{t:main} in Section~\ref{main}.

 \section{The Tur\'an argument}\label{Turan}
Following the ideas of  Irving~\cite{Irving} and  Conlon~\cite{Conlon},   we study  a Tur\'an-type problem on bipartite graphs: how many edges can a bipartite graph   have if  it is $S(n,m)$-free?   \cref{l:bipartite}, which  plays a key role in the proof of \cref{t:main},  considers the case when  the host bipartite graph is a spanning subgraph of $K_{p,p}$.  The bound in  \cref{l:bipartite} is tight.

  \begin{thm}\label{l:bipartite}
  Let $H$ be a spanning subgraph of $K_{p,\, p}$ such that $H$ is $S(n,m)$-free.  If   $p\ge 3n+1$ and $n\ge m$, then $e(H)\le \max\{np, 2m(p-m)\}$. 
\end{thm}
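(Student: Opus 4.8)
The plan is to classify the vertices of $H$ by degree and to exploit the fact that an $S(n,m)$-free bipartite graph cannot contain an edge both of whose endpoints have large degree. Write $X,Y$ for the two parts of $H$, each of size $p$, and call a vertex \emph{big} if its degree is at least $n+1$, \emph{medium} if its degree lies in $[m+1,n]$, and \emph{small} if its degree is at most $m$; let $A,M,S$ denote these three classes, with $A_X=A\cap X$, $S_Y=S\cap Y$, and so on. The first step is the structural observation that, since $n\ge m$, any edge $uv$ with $d_H(u)\ge n+1$ and $d_H(v)\ge m+1$ spans a copy of $S(n,m)$ (the bipartition automatically separates the two sets of leaves). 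Hence every big vertex sends all of its edges to small vertices on the opposite side, and there is no edge between two vertices of degree at least $m+1$ unless both have degree at most $n$. Let $a_X,a_Y$, $\mu_X,\mu_Y$, $s_X,s_Y$ count the big, medium, and small vertices of $X$ and $Y$, so $a_X+\mu_X+s_X=a_Y+\mu_Y+s_Y=p$.

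The second step handles most of the range cheaply. Summing $e(H)=\sum_{x\in X}d_H(x)$ and using that each big vertex in $X$ contributes at most $m\,s_Y$ to the total (its small neighbours in $Y$ have capacity $m$ each) gives $e(H)\le m(s_X+s_Y)+n\mu_X$, and symmetrically $e(H)\le m(s_X+s_Y)+n\mu_Y$. Adding the two and substituting $s_X+s_Y=2p-(a_X+a_Y)-(\mu_X+\mu_Y)$ yields $2e(H)\le 4mp-2m(a_X+a_Y)+(n-2m)(\mu_X+\mu_Y)$. If $n\ge 2m$ the last coefficient is nonnegative, so bounding $\mu_X+\mu_Y\le 2p-(a_X+a_Y)$ collapses the right-hand side to $2np-n(a_X+a_Y)\le 2np$, i.e.\ $e(H)\le np$; this already settles the case $n\ge 2m$. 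Likewise, if one side, say $Y$, contains no big vertex then every $y\in Y$ has $d_H(y)\le n$ and $e(H)\le np$ directly. So it remains to treat $m\le n<2m$ with big vertices on both sides.

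The third step sharpens the accounting in this remaining regime, where the crude bounds above overshoot. From the small-vertex capacities one gets $e(H)\le m(s_X+s_Y)+e_H(M_X,M_Y)-e_H(S_X,S_Y)$, isolating the medium–medium edges (bounded by $n\min(\mu_X,\mu_Y)$) and carrying a favourable $-\,e_H(S_X,S_Y)$ term. A complementary, degree-based bound uses that each big vertex meets at most $s_Y$ (resp.\ $s_X$) small vertices, so $e_H(A_X,S_Y)\le\min(a_X,m)\,s_Y$ and $e_H(A_Y,S_X)\le\min(a_Y,m)\,s_X$, while the small–small edges now appear with the opposite sign. Adding the two inequalities cancels $e_H(S_X,S_Y)$ and leaves a bound on $2e(H)$ that is an explicit function of $a_X,a_Y,\mu_X,\mu_Y,s_X,s_Y$ alone (the medium vertices contributing controlled $n\mu_X,n\mu_Y$ terms).

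It then remains to maximise this function subject to $a_X+\mu_X+s_X=a_Y+\mu_Y+s_Y=p$, to $a_X,a_Y\ge 1$, and to the feasibility constraint that a big vertex in one part needs at least $n+1$ small neighbours in the other. I expect this optimisation to be the crux. One splits on whether $\min(a_X,a_Y)\ge m$: if so, $\min(a_X,m)=\min(a_Y,m)=m$ and the bound reduces at once to $2m(p-m)$. The hard sub-case is when both sides carry \emph{few} big vertices, $1\le a_X,a_Y<m$; after substituting $s_X=p-a_X-\mu_X$ etc., the resulting expression is a saddle-type function of $(a_X,a_Y)$ whose maximum over the feasible box sits on the boundary, and the naive estimate overshoots $2m(p-m)$ by a lower-order term. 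The entire point is that forcing this boundary maximum back down to $\max\{np,2m(p-m)\}$ reduces to an inequality of the form $p\ge 3n-O(1)$, which is exactly guaranteed by the hypothesis $p\ge 3n+1$ (together with $n\ge m$). The two extremal configurations that this analysis must respect — an $n$-regular subgraph, giving $np$, and the graph built from $m$ big vertices on each side joined to disjoint blocks of small vertices, giving $2m(p-m)$ — confirm that the bound is best possible.
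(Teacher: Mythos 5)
Your Step 1 (big vertices send edges only to small vertices) and Step 2 are correct, and Step 2 genuinely settles the case $n\ge 2m$ — where $\max\{np,2m(p-m)\}=np$ — as well as the case where one side has no big vertex; this part is a clean, self-contained argument that differs from the paper (which never needs the medium/small split for that case). The gap is in Steps 3--4, i.e.\ exactly the regime $m\le n<2m$ that you yourself flag as ``the crux.'' You have not carried out the optimisation, and in fact it \emph{cannot} be carried out with the inequalities you propose: adding your two bounds charges the medium vertices $n\mu_X+n\mu_Y+n\min(\mu_X,\mu_Y)$ in the bound on $2e(H)$, whereas their true contribution to $2e(H)$ through medium--medium edges is at most $2e_H(M_X,M_Y)\le 2n\min(\mu_X,\mu_Y)$. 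This factor-$\tfrac32$ overcount is fatal when medium vertices dominate. Concretely, take $m=4$, $n=6$ (so $4m/3<n<2m$), $a_X=a_Y=1$, $s_X=s_Y=n+1=7$, $\mu_X=\mu_Y=p-8$: your combined bound evaluates to $2(m+1)(n+1)+3n(p-n-2)=18p-74$, while $2\max\{np,2m(p-m)\}=16p-64$ for large $p$. The overshoot is $2p-10$, which \emph{grows} with $p$, so no hypothesis of the form $p\ge 3n-O(1)$ can close it; the configuration is also realizable (an $n$-regular medium--medium block plus two pendant big vertices), so the slack is in your inequalities, not in the feasible region.

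What is missing is a mechanism to control the number of high-degree vertices rather than capping their contribution by $\min(a_X,m)$ inside the counting. This is the paper's key idea, and it is of a different nature: choose $H$ extremal — maximum number of edges among $S(n,m)$-free spanning subgraphs, and subject to that, the minimum number of vertices of degree $\ge n+1$ — and then run an edge-switching argument (delete excess edges at a big vertex $y$ and re-route them to non-neighbours among the other big vertices on $y$'s side) to conclude that each side has \emph{at most $m$} vertices of degree $\ge n+1$. With that hard bound $|X_1|,|Y_1|\le m$ in hand, the paper needs only the crude estimate $e(H)\le(|Y_1|+m)|Y_2|+n|Y_3|$ and a two-case analysis on $|Y_2|$ versus $p-m$ to finish, with no delicate optimisation at all. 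Your proposal contains no substitute for this step, and without it the accounting for the mixed regime $m\le n<2m$ does not close.
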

\begin{proof} Let $G:=K_{p,\,p}$ with bipartition $X, Y$ 
and let $H$ be a spanning subgraph of $G$ such that $H$ is $S(n,m)$-free. 
Define
\[X_1:=\{x\in X\mid d_H(x)\ge n+1\} \,\,\text{ and }  \,\,Y_1:=\{y\in Y\mid d_H(y)\ge n+1\}.\]
 We choose $H$ such  that 
\begin{enumerate}[(a)]
\item $e(H)$ is maximum among all  spanning subgraphs of $G$ that are $S(n,m)$-free; and
\item  subject to (a), $|X_1|+|Y_1|$ is minimum.
\end{enumerate}  
 Let $X_2:=N_H(Y_1)$, $Y_2:=N_H(X_1)$,  $X_3:=X\less(X_1\cup X_2)$ and $Y_3:=Y\less(Y_1\cup Y_2)$. 
Since $H$ is $S(n,m)$-free, we see that  $X_1$ is anti-complete to $Y_1$ in $H$,   $d_H(v)\le m$ for each  vertex $v \in  X_2 \cup Y_2$, and  $d_H(u)\le n$ for each  vertex $u \in X_3$ $\cup$ $Y_3$.  Assume  first $|Y_1|=0$. Then $|X_2|=0$ and  $|Y_2|+|Y_3|=p$.  Thus   $e(H)=e_H(Y_2, X)+e_H(Y_3, X) \le m|Y_2|+n|Y_3|\le n(|Y_2|+|Y_3|)=np $, as desired. We may assume that $|Y_1|\ne 0$. Similarly, $|X_1|\ne 0$.  Then $|X_2|\ge n+1$ and $|Y_2|\ge n+1$. We next claim that $|X_1|\le m$ and $|Y_1|\le m$. \medskip

Suppose   $|X_1|\ge m+1$  or $|Y_1|\ge m+1$, say  the latter.    Let $y\in Y_1$ with $d_H(y)=q\ge n+1$. Let $x_1, \ldots, x_{q-n}\in  X_2$ be distinct neighbors of $y$. For each $i\in[q-n]$, since $d_H(x_i)\le m<|Y_1|$, there must exist a vertex $y_i\in Y_1$ with $y_i\ne y$ such that $x_iy_i\notin E(H)$. Let $H^*$ be obtained from $H$ by first deleting edges $yx_1, \ldots, yx_{q-n}$, and then adding edges $x_1y_1, \ldots, x_{q-n}y_{q-n}$. It is simple to check that $H^*$ is $S(n,m)$-free, $e(H^*)=e(H)$ and $d_{H^*}(y)=n$. But then  
\[|\{v\in V(H^*)\mid d_{H^*}(v)\ge n+1\}|=|X_1|+|Y_1\less y|<|X_1|+|Y_1|,\] contrary to the choice of $H$.  This proves that 
$|X_1|\le m$ and $|Y_1|\le m$, as claimed. \medskip
 
For the remainder of the proof, we assume that $|X_2|\le |Y_2|$. 
  It follows that   
\[e(H)=e_H(Y_1, X_2)+e_H(Y_2, X)+e_H(Y_3, X) \le |X_2||Y_1|+m|Y_2|+n|Y_3|\le (|Y_1|+m)|Y_2|+n|Y_3|. \tag{$*$}\]

We first consider the case   $|Y_2|\ge p-m$. Let $|Y_2|:=p-m+r$ for some integer $r$ satisfying $0\le r\le m$. Then $|Y_1|=p-|Y_2|-|Y_3|=p-(p-m+r)-|Y_3|=m-r-|Y_3|$. By ($*$), 
\begin{align*}
e(H)&\le  (|Y_1|+m)|Y_2|+n|Y_3|\\
&= (m-r-|Y_3|+m)(p-m+r)+n|Y_3|\\
&=2m(p-m)-(p+r-3m)r -(p+r-n-m)|Y_3|\\
&\le 2m(p-m), 
\end{align*}
as desired, because $p\ge 3n+1$ and $n\ge m$. \medskip

It remains to consider the case $n+1\le |X_2|\le |Y_2|<p-m$.  By ($*$), 
\begin{align*}
e(H)&\le  (|Y_1|+m)|Y_2|+n|Y_3|\\
&=(|Y_1|+m)|Y_2|+  n(p-|Y_1|-|Y_2|)\\
&= (|Y_2|-n)|Y_1|+np+(m-n)|Y_2|\\
&\le (|Y_2|-n)m+np+(m-n)|Y_2|\\
&=np-nm+(2m-n)|Y_2|\\
&\le \begin{cases}
 np  &\text{ if } n\ge 2m\\
 np-nm+(2m-n)(p-m)=2m(p-m)  &\text{ if } n< 2m,
 \end{cases}
\end{align*}
as desired. This completes the proof of \cref{l:bipartite}.
\end{proof}

\section{Proof of \cref{t:main}}\label{main}

We begin with a lemma on the lower bound for the bipartite Ramsey number of double stars. 

\begin{lem}\label{l:lowbd}   For all   $k\ge1$ and $n\ge m\ge1$, we have 
 \[r_{bip}(S(n,m);k)\ge  kn+1.\]
 \end{lem}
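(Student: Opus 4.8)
The plan is to exhibit a critical $k$-coloring of $K_{kn,kn}$, that is, a $k$-edge-coloring under which no color class contains a copy of $S(n,m)$. Such a coloring witnesses $K_{kn,kn}\nrightarrow(S(n,m);k)$ and therefore gives $r_{bip}(S(n,m);k)\ge kn+1$, which is exactly the claimed lower bound.

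The guiding observation is a degree count. The double star $S(n,m)$ has a vertex, namely the center of the $K_{1,n}$, of degree $n+1$, so any graph that contains $S(n,m)$ must possess a vertex of degree at least $n+1$. Hence any spanning subgraph of $K_{kn,kn}$ of maximum degree at most $n$ is automatically $S(n,m)$-free, and this holds for every admissible $m$ with $1\le m\le n$. It therefore suffices to partition the edge set of $K_{kn,kn}$ into $k$ spanning subgraphs, each of maximum degree at most $n$, and then color the edges of the $\ell$-th subgraph with color $\ell$.

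To produce such a partition, I would first note that $K_{kn,kn}$ is a $(kn)$-regular bipartite graph, so by König's edge-coloring theorem its edges decompose into $kn$ perfect matchings $M_1,\dots,M_{kn}$; equivalently, $K_{kn,kn}$ admits a proper $(kn)$-edge-coloring. Grouping these matchings into $k$ blocks of $n$ matchings each, I would assign color $\ell\in[k]$ to every edge in $M_{(\ell-1)n+1}\cup\cdots\cup M_{\ell n}$. Each color class is then an $n$-regular bipartite graph, so its maximum degree equals $n$. By the observation above, no color class contains $S(n,m)$, so the coloring is critical and the lower bound follows.

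I do not expect a genuine obstacle: the construction relies only on the classical decomposition of a regular bipartite graph into perfect matchings, and the verification is the one-line degree bound. The only point deserving a moment's care is confirming that maximum degree at most $n$ rules out $S(n,m)$ for every $m$ with $1\le m\le n$, which is immediate since the center of the larger star always forces a vertex of degree $n+1$.
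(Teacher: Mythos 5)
Your proof is correct and is essentially identical to the paper's own argument: both decompose $K_{kn,kn}$ into $kn$ perfect matchings via a proper edge-coloring, group them into $k$ blocks of $n$ to get $n$-regular color classes, and conclude via the fact that $S(n,m)$ has a vertex of degree $n+1$. No gaps.
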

\begin{proof} Let $G:=K_{kn, kn}$.  It suffices to show that $G  \nrightarrow (S(n,m);k)$. Note that  $G$ has a proper $kn$-edge-coloring, and so $G$ has $kn$ pairwise disjoint perfect matchings, say    $M_1, \ldots, M_{kn}$. Let $\tau: E(G)\rightarrow [k]$ be obtaining by coloring all edges in $M_{\ell n+1}\cup \cdots \cup M_{\ell n+n}$ by color $\ell+1$ for each $\ell \in\{0, \ldots, k-1\}$. It follows that $G$ contains no monochromatic $S(n,m)$ under $\tau$, as $S(n,m)$ has maximum degree $n+1$.
\end{proof}

We are now ready to prove \cref{t:main},  which we restate here for convenience.\doublestar*

\begin{proof} Let $k, n,m$ be as given in the statement.    By \cref{l:lowbd}, it suffices to show that $G  \rightarrow (S(n,m);k)$, where  $G:=K_{kn+1, kn+1}$.    
  Suppose $G  \nrightarrow (S(n,m);k)$. 
Let  $X, Y$ be the bipartition of $G$.   Let $\tau: E(G)\rightarrow [k]$ be a critical $k$-coloring $G$ with color classes $E_1, \ldots, E_k$.   For each $\ell\in[k]$,  let $G_\ell$ be the spanning subgraph of $G$ with edge set $E_\ell$.   Then $G_\ell$ is $S(n,m)$-free. Suppose $k\ge3$ and $n\ge 2m$. By \cref{l:bipartite}, $e(G_\ell)\le n(kn+1)$ for all $\ell\in[k]$. Then 
\[(kn+1)^2=e(G)=e(G_1)+\cdots+e(G_k)\le kn(kn+1),\]
which is impossible. Thus $k=2$ and so $|X|=|Y|=2n+1$.  We may further assume that color $1$ is blue and color $2$ is red. We denote $G_1$ and $G_2$ by $G_b$ and $G_r$, respectively. 
Let 
\[A:=\{x\in X\mid d_{G_b}(x)\ge n+1\} \text{ and } B:=\{y\in Y\mid yx\in E(G_b) \text{ for some } x\in A\}.\]
 Since $G_b$ is $S(n,m)$-free, we see that  $d_{G_b}(v)\le m$ for each  $v\in B$. Note that  $|X|=|Y|=2n+1$ and $n\ge m$.  It follows that  $d_{G_r}(x)\ge n+1$ for each $x\in X\less A$,  and $d_{G_r}(y)\ge n+1$ for each $y\in B$. Thus  $B$ is anti-complete to $X\less A$ in $G_r$ because $G_r$ is $S(n,m)$-free, and so $N_{G_r}(v)\subseteq A$ for each $v\in B$. 
 But then 
 \[(n+1)|A|\le e_{G_b}(A, B)\le m|B|\,\, \text{ and } \,\,(n+1)|B|\le  e_{G_r}(B, A)\le m|A|,\]
 which is impossible because $n\ge m$. 
\end{proof}


\begin{thebibliography}{99}

 \bibitem{Beineke}  L.W. Beineke and A.J. Schwenk.
 \newblock On a bipartite form of the Ramsey problem.
 \newblock Proc. 5th British Combin. Conf. 1975, Congressus Numer. XV, 1975, pp. 17--22.
\bibitem{Bucicr3}
M.~Buci\'c, H.~Letzter and B.~Sudakov.
\newblock 3-color bipartite Ramsey number of cycles and paths.
\newblock {\em Journal of Graph Theory}, 92:445--459, 2019.
\bibitem{Bucicr3P}
M.~Buci\'c, H.~Letzter and B.~Sudakov.
\newblock Multicolour bipartite Ramsey number of paths.
\newblock {\em The Electronic Journal of Combinatorics} 26 (2019), \#P3.60.
\bibitem{Conlon}
D~Conlon.
\newblock A new upper bound for the bipartite Ramsey problem.
\newblock {\em Journal of Graph Theory}, 58(4):351--356, 2008.

\bibitem{DeBiaBip}
L.~DeBiasio, A.~Gy\'arf\'as, R.~Krueger, M.~Ruzink\'o and G.~S\'ark\"ozy.
\newblock Monochromatic balanced components, matchings, and paths in
  multicolored complete bipartite graph.
\newblock {\em Journal of Combinatorics}, 11(1):35--45, 2020.
\bibitem{DKS} 
L.~DeBiasio,   R.~Krueger and G.~S\'ark\"ozy.
\newblock Large monochromatic components in multicolored bipartite graphs.
\newblock {\em Journal of Graph Theory}, 94:117--130, 2020.
\bibitem{FSpath} R. J. Faudree and R. H. Schelp.
\newblock Path-path Ramsey-type numbers for the complete bipartite graph.
\newblock {\em Journal of  Combin. Theory Ser. B} 19 (1975), no. 2, 161–173.

  

\bibitem{GY1} A. Gy\'arf\'as, Partition coverings and blocking sets in hypergraphs (in Hungarian) {\em Commmun. Comput. Autom. Inst. Hungarian Academy of Sciences} {\bf 71. } (1977) 62 pp.



\bibitem{GyarfBipPath}
A.~Gy\'arf\'as and J.~Lehel.
\newblock A Ramsey-type problem in directed and bipartite graphs.
\newblock {\em Period. Math. Hungar}, 3(3-4):299 -- 304, 1973.


\bibitem{hattingh} J. H. Hattingh and  M. A. Henning.
\newblock Bipartite Ramsey theory.
\newblock Utilitas Math 53 (1998), 217--230.

\bibitem{Irving} R. W. Irving.
\newblock A bipartite Ramsey problem and Zarankiewicz numbers.
\newblock Glasgow Math J 19 (1978), 13--26.

\bibitem{Liu2008}
H.~Liu, R.~Morris and N.~Prince.
\newblock Highly connected monochromatic subgraphs of multicolored graphs.
\newblock {\em Journal of Graph Theory}, 61(1):22--44, 2009.

\bibitem{Mubayi2001}
D.~Mubayi.
\newblock Generalizing the Ramsey problem through diameter.
\newblock {\em The Electronic Journal of Combinatorics}, 9:R42--R42, 2002.
 
 
 \bibitem{doublestar} J. Ruotolo and Z.-X. Song. 
\newblock Multicolor Ramsey  number for double stars. 
\newblock arXiv:2211.03642

\bibitem{Sarkozy2022}
G.~S\'ark\"{o}zy.
\newblock Improved monochromatic double stars in edge colorings.
\newblock {\em Graphs and Combinatorics}, 38(3):78, 2022.
\bibitem{2-cycles}
Z. Yan and  Y. Peng.
 \newblock Bipartite Ramsey numbers of cycles.
\newblock  	arXiv:2112.14960. 
\end{thebibliography}
\end{document}